\title{The scaling limits for Wiener sausages in random environments}
\author{Chien-Hao Huang\footnote{email address: chienhaohuang@ntu.edu.tw} \\
Department of Mathematics \\ 
National Taiwan University}
\newtheorem{theorem}{\bf Theorem}[section]
\newtheorem{lemma}{\bf Lemma}[section]
\numberwithin{equation}{section}
\date{}
\begin{document}
\maketitle

{\bf Keywords.} {Random walk, random environment, Wiener sausage, continuum limit, Wiener chaos}

\begin{abstract}
   We consider the statistical mechanics of a random polymer with random walks and disorders in $\mathbb{Z}^d$. The walk collects random disorders along the way and gets nothing if it visits the same site twice. In the continuum and weak disorder regime, the partition function as a random variable converges weakly to a Wiener Chaos expansion when the dimension is lower than the critical dimension, which is four. A finite temperature case in one dimension is also discussed. The last case suggests that the end-point behavior of the polymer is $t^{2/3}$.
\end{abstract}

\section{Introduction}

\subsection{The model}

We consider the model that a particle walks on $\mathbb{Z}^d$ lattice, and $\{\omega_x\}_{x\in \mathbb{Z}^d}$ is an i.i.d. random field with mean zero, variance one and finite exponential moment under the measure $\mathbb{P}$. The movement of the particle is decribed by a simple symmetric random walk $S_n$ with the measure $P$. $\mathbb{P}$ and $P$ are independent. The particle utilizes the field when it visits a new site $x$, then the random field no longer exists. Let $\mathcal{R}_n$ denote the set of sites visit by the particle in the first $n$ steps, that is, 
$$\mathcal{R}_n :=\{\; x\in \mathbb{Z}^d\; |\; S_i=x,1\leq i\leq n \},$$
and $R_n=|\mathcal{R}_n|$ is the cardinality of $\mathcal{R}_n$.

The Hamiltonian is defined as follows
\begin{equation}
H_n :=   \sum_{x\in \mathbb{Z}^d} \; (\beta\omega_x+h)  \cdot 1_{x\in \mathcal{R}_n}.
\end{equation}
$\beta\geq 0$ is related to the inverse temperature and $h\in\mathbb{R}$ represents an external force.
The polymer measure is
\begin{equation}
P^{\omega}_{n}(S) := \frac{1}{Z_n} \exp ( H_n) \cdot P(S)
\end{equation}
where
\begin{equation}
Z_n := E(\exp ( H_n))
\end{equation}
is called the quenched partition function. We also define the annealed partition function
\begin{equation}
\mathbb{E} Z_n := E(e^{(\lambda(\beta)+h) R_n} )
\end{equation}
where $\lambda(\beta) =\log \mathbb{E} e^{\beta \omega_0}$. In the literature, the annealed model is called Wiener sausage. \cite{DV79} is the seminal paper when $h<-\lambda(\beta)$, and \cite{vdBT91} considered $h>-\lambda(\beta)$ with the random walks replaced by Brownian motions.

In this paper, we dicuss the case $h=-\lambda(\beta)$ and the intermediate regime $\beta=\beta_n$, that is,

\begin{equation}
Z_n(\beta_n) :=  E\left(e^{ \sum_{x\in \mathbb{Z}^d} \; (\beta_n\omega_x-\lambda(\beta_n))  \cdot 1_{x\in \mathcal{R}_n}}\right) , \;\;\mathbb{E}Z_n(\beta_n) =1.
\end{equation}

This intermediate regime was discussed in \cite{CSZ17}. Define $\eta_x(\beta) =\beta^{-1}(e^{\beta \omega_x -\lambda(\beta)}-1)$ and recall that \begin{equation}
1_{x\in \mathcal{R}_n}=1_{T_x\leq n}.
\end{equation} 
\cite{CSZ17} consider the following expansion of $Z_n(\beta_n)$,
\[\begin{array}{cl}
\displaystyle  Z_n(\beta_n) &\displaystyle = E \left[ \prod_{x\in Z^d}\limits  \left(1+\left(e^ {\beta_n \omega_x -\lambda(\beta_n)}-1\right)1_{T_x\leq n}\right) \right] = E \left[ \prod_{x\in Z^d}\limits  \left(1+\beta_n\eta_x(\beta_n)1_{T_x\leq n}\right) \right] \\
& \displaystyle =  E \left[ 1+ \beta_n\sum_{x\in \mathbb{Z}^d}\limits \eta_x(\beta_n ) 1_{T_x\leq n} \right. \\
&  \;\;\;\;\;\;\; \displaystyle + \frac{1}{2!}\beta_n^2\sum_{x,y\in \mathbb{Z}^d;\; x\neq y}\limits  \eta_x(\beta_n ) 1_{T_x\leq n} \; \eta_y(\beta_n ) 1_{T_y\leq n}\\
&   \;\;\;\;\;\;\; \displaystyle \left. +\frac{1}{3!}\beta_n^3\sum_{x,y,z\in \mathbb{Z}^d; \; x\neq y \neq z}\limits  \eta_x(\beta_n ) 1_{T_x\leq n}  \;\eta_y(\beta_n ) 1_{T_y\leq n} \;\eta_z(\beta_n ) 1_{T_z\leq n} + \cdots\right]
\end{array}\]
\[\left.\begin{array}{cl}
&\displaystyle =  1+ \beta_n\sum_{x\in \mathbb{Z}^d}\limits \eta_x(\beta_n ) \;E(1_{T_x\leq n}) \\
&   \;\;\;\; \displaystyle + \frac{1}{2!}\beta_n^2\sum_{x,y\in \mathbb{Z}^d; x\neq y}\limits  \eta_x(\beta_n ) \eta_y(\beta_n ) \; E(1_{T_x\leq n}  1_{T_y\leq n})\\
&    \;\;\;\; \displaystyle +\frac{1}{3!}\beta_n^3\sum_{x,y,z\in \mathbb{Z}^d; x\neq y \neq z}\limits  \eta_x(\beta_n ) \eta_y(\beta_n )\eta_z(\beta_n ) \; E(1_{T_x\leq n} 1_{T_y\leq n} 1_{T_z\leq n}) + \cdots .
\end{array}\right\}(*)\]
Notice that $\eta_x$'s are i.i.d. with $\mathbb{E}\eta_x = 0$ and $\sigma^2(\eta_x) \approx 1$ when $\beta \approx 0$.

This expansion is going to converge to a non-trivial limit $Z^{W}$ as $\beta_n$ approaches $0$.
We need two ingredients to show the convergence. First, the scaling limits for $k$-point function $E(1_{T_{\sqrt{n}x_1} \leq n}\cdots 1_{T_{\sqrt{n}x_k} \leq n})$. Second, $\beta_n$ is chosen such that the variance of the first-order term in $(*)$ is finite in the limit.
\begin{equation}\label{betan}
Var(\beta_n\sum_{x\in Z^d} \eta_x E1_{x\in \mathcal{R}_n} )\sim \beta_n^2 \sum_{x\in Z^d} [E1_{x\in \mathcal{R}_n}]^2=  \beta_n^2 \: E\times E' (S^1[1,n] \cap S^2 [1,n]) .
\end{equation}
$E\times E' (S^1[1,n] \cap S^2 [1,n]) $ is called the intersection of independent ranges \cite{Chen09}. Define $$J_n^{(p)} := \# \{S^1[1,n]\cap S^2[1,n]\cap ...\cap S^p[1,n]\}.$$ 
Let $p=2$ and $J_n=J_n^{(2)}$,
\[\begin{array}{cll}
 d=1, & \;\;\;  \displaystyle \frac{1}{\sqrt{n}} J_n  &\;  \rightarrow_{law}\; \displaystyle \min_{i=1,2}\max_{0\leq s \leq 1} B^i(s) - \max_{i=1,2}\min_{0\leq s \leq 1} B^i(s) ,\\
 d=2, & \;\;\; \displaystyle \frac{(\log n)^2}{n} J_n &\;  \rightarrow_{law} \; \displaystyle 2\pi^2 \;\alpha\left([0,1]^2\right),\\
 d=3, & \;\;\;  \displaystyle \frac{1}{\sqrt{n}} J_n &\;  \rightarrow_{law} \; \displaystyle 2 \gamma^2 \;\alpha\left([0,1]^2\right),\\
 d=4, & \;\;\;  \displaystyle \frac{1}{\log n} J_n &\;  \rightarrow_{law} \; 2 (2\pi)^{-2}\; \gamma^2 \;N(0,1)^2,  
\end{array}\]
and $d\geq 5, J_n <\infty$ almost surely. $\gamma=\gamma_d$ is the escape rate of $d$-dimensional simple random walk, and $\alpha(\cdot)$ is the 2-multiple mutual-intersection local time of independent Brownian motions $B^1(t)$ and $B^2(t)$ \cite{Chen09}. Because of \eqref{betan}, $\beta_n$ is chosen as follows
\[\begin{array}{cc}
d=1, & \;\;\;  \displaystyle \beta_n=\frac{\hat{\beta}}{n^{1/4}} ,\\
d=2, & \;\;\; \displaystyle \beta_n=\frac{\log n }{\sqrt{n}} \hat{\beta},\\
d=3, & \;\;\;  \displaystyle \beta_n=\frac{\hat{\beta}}{n^{1/4}} .
\end{array}\]

\subsection{Main results}

Denote $\bar{p}_n(x):= \left(\frac{d}{2\pi n}\right)^{d/2} \exp(-\frac{d|x|^2}{2n})$.
For $d=2$, $g_{t}(x) := \pi\bar{p}_t(x) $, for $d=3$, $g_{t}(x) :=\gamma \bar{p}_t(x)$.

\begin{theorem}
Let $[0,t]^k_{<} :=\{(t_1,t_2,...,t_k); 0\leq t_1\leq t_2\leq ...\leq t_k\leq t\}$ and $\Sigma_k$ be the permutation group over $\{1,...,k\}$. Also set $t_0=0, x_{\sigma(0)}=0$. The expansion $(*)$ of $Z_{Nt}(\beta_{N})$ converges weakly to
a Wiener chaos expansion
\begin{equation}
Z_t^{W} := 1+ \sum_{k=1}^\infty \frac{\hat{\beta}^k}{k!}\int\cdots\int_{(R^d)^k} \psi_t(x_1,...,x_k)W(dx_1)\cdots W(dx_k).
\end{equation}
as $N\to \infty$.
For d=2,3,
\begin{equation}
\psi_{t}(x_1,\dots,x_k):= \sum_{\sigma\in\Sigma_k} \int_{[0,t]^k_{<}} \prod_{m=1}^k  g_{t_m-t_{m-1}}(x_{\sigma(m)}-x_{\sigma(m-1)}) \; dt_1 \cdots dt_k .
\end{equation}

For d=1, 
\begin{equation}
\psi_{t}(x_1,\dots,x_k):= \sum_{\sigma\in\Sigma_k} P( \min_{0\leq s \leq t} B_s \leq x_{\sigma(1)}<\cdots< x_{\sigma(k)}\leq \max_{0\leq s \leq t} B_s). 
\end{equation}
\end{theorem}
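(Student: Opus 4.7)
The plan is to verify the convergence of $(*)$ term by term in $L^2(\mathbb{P})$, chaos by chaos, and then promote this to convergence of the full series via a uniform tail estimate, in the spirit of the framework of \cite{CSZ17}.

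First, I would write the $k$-th order piece of $(*)$ at $n=Nt$ as
\begin{equation*}
Z^{(k)}_N := \frac{\beta_N^k}{k!} \sum_{\substack{x_1,\ldots,x_k\in\mathbb{Z}^d \\ x_i\text{ distinct}}} \eta_{x_1}(\beta_N)\cdots\eta_{x_k}(\beta_N)\,\Phi_N^{(k)}(x_1,\ldots,x_k),
\end{equation*}
where $\Phi_N^{(k)}(x_1,\ldots,x_k):= E[1_{T_{x_1}\leq Nt}\cdots 1_{T_{x_k}\leq Nt}]$. The $\eta_x(\beta_N)$ are i.i.d., centered, with variance tending to $1$, and the choice of $\beta_N$ in each dimension is precisely tuned so that
\begin{equation*}
\mathrm{Var}(Z^{(k)}_N) \;=\; \frac{\beta_N^{2k}}{k!}\,\sigma^2(\eta_0)^k\sum_{x_i\text{ distinct}}\bigl(\Phi_N^{(k)}(x_1,\ldots,x_k)\bigr)^2
\end{equation*}
has a finite non-trivial limit, matching the target $\tfrac{\hat\beta^{2k}}{k!}\|\psi_t\|_{L^2}^2$ of the claimed chaos.

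The analytic heart is the $L^2$-convergence of the rescaled kernels: after the diffusive rescaling $x_i=\sqrt N y_i$, the deterministic functions $\beta_N^k\,\Phi_N^{(k)}(\sqrt N y_1,\ldots,\sqrt N y_k)$, viewed on the rescaled lattice $N^{-1/2}\mathbb{Z}^d$, should converge in $L^2((\mathbb{R}^d)^k)$ to $\hat\beta^k\psi_t(y_1,\ldots,y_k)$. For $d=2,3$ I would split $\Phi^{(k)}_N$ according to the time-ordering of the hitting times,
\begin{equation*}
\Phi^{(k)}_N(x_1,\ldots,x_k)=\sum_{\sigma\in\Sigma_k} P\bigl(T_{x_{\sigma(1)}}<\cdots<T_{x_{\sigma(k)}}\leq Nt\bigr),
\end{equation*}
and apply the strong Markov property iteratively, so that each ordered piece becomes a sum over $0<s_1<\cdots<s_k\leq Nt$ of products of first-hitting transition probabilities. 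A local CLT together with uniform overshoot control at each successive hit should show these factors collapse in the scaling limit to $\prod_m g_{t_m-t_{m-1}}(x_{\sigma(m)}-x_{\sigma(m-1)})$, reproducing the stated $\psi_t$. For $d=1$ the walk is recurrent and its range fills an interval, so Donsker's invariance principle jointly for $(\min_{s\leq t}S_s,\max_{s\leq t}S_s)$ gives $P(T_{\sqrt N y_i}\leq Nt\text{ for all }i)\to P(y_i\in[\min_{s\leq t}B_s,\max_{s\leq t}B_s]\text{ for all }i)$, which on splitting over orderings of the $y_i$ produces the stated one-dimensional formula.

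Once the $L^2$-kernel convergence is in place, I would apply a polynomial chaos invariance principle (Mossel-O'Donnell-Oleszkiewicz, or a fourth-moment / contraction argument) to upgrade to $Z^{(k)}_N\to\tfrac{\hat\beta^k}{k!}\int\psi_t\,dW^{\otimes k}$ in $L^2(\mathbb{P})$; the vanishing-influence hypothesis is immediate because each single $\beta_N\eta_x$ is uniformly small. To pass from per-chaos convergence to convergence of the full series, the last ingredient is the uniform tail bound $\limsup_N\sum_{k>K}\mathrm{Var}(Z^{(k)}_N)\to 0$ as $K\to\infty$, which by orthogonality reduces to controlling $\beta_N^{2k}\sum_{\text{distinct}}(\Phi^{(k)}_N)^2$, and ultimately to moment estimates for the $p$-fold intersection quantities $J_n^{(p)}$ whose scaling is summarized in the excerpt. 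The chief obstacle I anticipate is the $L^2$-kernel convergence in $d=2,3$: one must justify a joint local CLT for successive hitting times (with quantitative overshoot control) while simultaneously suppressing the diagonal contributions $x_i\approx x_j$, where the short-time singularities of the $g_s$ could otherwise produce non-integrable blow-ups; in $d=1$ the corresponding obstruction is non-local, requiring a direct invariance argument for the range process in lieu of a Gaussian factorization.
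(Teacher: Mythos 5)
Your proposal is correct and follows essentially the same route as the paper: diffusively rescale the $k$-point hitting kernels, prove their $L^2((\mathbb{R}^d)^k)$ convergence by decomposing over the time-ordering of the hitting times and converting first-hitting probabilities to transition probabilities (which is where the Green's-function constants in $g_t$ arise), then invoke the polynomial chaos convergence theorem of \cite{CSZ17} together with a uniform tail bound on the higher chaoses via moment estimates for Brownian intersection local times from \cite{Chen09}. The only cosmetic differences are that the paper packages the Lindeberg step as conditions (i)--(iii) of Theorem 2.3 in \cite{CSZ17} rather than citing Mossel--O'Donnell--Oleszkiewicz directly, and the tail control rests on $k$-th moments of the two-fold intersection local time rather than on the $p$-fold quantities $J_n^{(p)}$ you mention.
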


\subsection{Discussions}


The overlap is defined as 
\begin{equation}
\sum_{x\in Z^d}E\times E'(1_{x\in \mathcal{R}_n} 1_{x\in \mathcal{R}'_n})
=E\times E' (S^1[1,n] \cap S^2 [1,n])=J_n. \end{equation}

From the behavior of the overlap $J_n$, when $d=1,2,3$, the model is called disorder relevant; $d\geq 5$ the model is disorder irrelevant.

\if 0
First order,
\begin{equation}
\beta_N\sum_{x\in Z^2} \eta_x E1_{x\in R_n} \rightarrow_{law}  \int_{\mathbb{R}^2} \left(\int_{0}^{1}\frac{1}{ t} \exp(-\frac{|x|^2}{t}) dt \right)  \hat{\beta} W(dx)
\end{equation}
\fi

$d=4$ is believed to be the critical dimension. We are not going to discuss this case here. See more details about the critical dimension in \cite{CSZ17b}.

There is a similar model called directed polymers in random environments (DPRE) \cite{Com17}. The Hamiltonian is 
\begin{equation}
H_n :=     \sum_{i=1}^n  (\beta\omega(i,S_i) -\lambda(\beta))
\end{equation}
The random potential $\omega$ is defined on space and time. Unlike the model we discussed in this paper, DPRE is directed. For the one-dimesional case, the weak disorder limit is the solution of stochastic heat equation \cite{AKQ14}, however, we don't see the analogous result here. On another hand, the critical dimension for DPRE is two, and here we have four instead.

The last comment we would like to make is that, for $d=1$, $h=0$ and finite $\beta$, the the scale of the end-point position of the walk seems to be $n^{2/3}$. Details are in Section 3.

\section{Proof of Theorem 1.1}

\subsection{Preliminary results}

Denote
$p_n(x) :=P(S_n=x)$, $G_n(x) := \sum_{m=1}^n p_m(x)$,
$G_n:=G_n(0)$, $\gamma_n := G_n^{-1}$.\\

We recall Lemma 5.1.3 in \cite{Chen09}.\\

\noindent
\textbf{Lemma A.} {\it For any $x\in\mathbb{Z}^d$,
\begin{equation}\label{5.1.8}
P(S_k=x) =\sum_{j=1}^{k} P(T_x=j)P(S_{k-j}=0).
\end{equation}
Consequently,
\begin{equation}\label{5.1.9}
\sum_{k=1}^n P(S_k=x) =\sum_{k=1}^{n} P(T_x=k)G_{n-k}.
\end{equation}
}

We first consider the one-point function. Notice that 
\begin{equation}
1_{T_x\leq n} =\sum_{i=1}^{n} 1_{T_x=i}.
\end{equation}
\begin{lemma}\label{lem:1point} 
$x\neq 0$,
\begin{equation}
	P(T_{x} \leq n ) \geq G_n^{-1}\sum_{k=1}^{n} P(S_{k} = x) 
\end{equation}
and
\begin{equation}
	G_{\epsilon n} P(T_{x} \leq n ) \leq \sum_{k=1}^{(1+\epsilon )n}  P(S_k =x ).
\end{equation}
\end{lemma}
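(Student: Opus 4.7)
The plan is to derive both inequalities directly from the renewal-type identity in Lemma A, namely
\[
\sum_{k=1}^{n} P(S_k = x) \;=\; \sum_{k=1}^{n} P(T_x = k)\, G_{n-k},
\]
combined with the obvious monotonicity $G_m \leq G_{m'}$ for $m \leq m'$, which is immediate from the definition $G_n(0) = \sum_{m=1}^n p_m(0) \geq 0$.

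For the lower bound on $P(T_x \leq n)$, I would simply bound the weight $G_{n-k}$ in the identity above from above by $G_n$, uniformly in $1 \leq k \leq n$. This yields
\[
\sum_{k=1}^{n} P(S_k = x) \;\leq\; G_n \sum_{k=1}^{n} P(T_x = k) \;=\; G_n\, P(T_x \leq n),
\]
which is the first inequality after dividing by $G_n$.

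For the upper bound, I would apply Lemma A at time $(1+\epsilon)n$ instead of $n$, and then restrict the sum on the right-hand side to $k \leq n$ in order to pick up a uniform lower bound on the Green's function weight. Concretely,
\[
\sum_{k=1}^{(1+\epsilon)n} P(S_k = x) \;=\; \sum_{k=1}^{(1+\epsilon)n} P(T_x=k)\,G_{(1+\epsilon)n-k} \;\geq\; \sum_{k=1}^{n} P(T_x=k)\,G_{(1+\epsilon)n-k},
\]
and for $k \leq n$ one has $(1+\epsilon)n - k \geq \epsilon n$, so monotonicity gives $G_{(1+\epsilon)n-k} \geq G_{\epsilon n}$. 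Factoring $G_{\epsilon n}$ out of the truncated sum then yields $G_{\epsilon n}\, P(T_x \leq n)$, which is the claimed inequality.

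There is no real obstacle here: the argument is just the renewal decomposition of Lemma A plus monotonicity of the Green's function. The only point to be careful about is the indexing in the truncation step of the upper bound, ensuring that the residual parameter $(1+\epsilon)n - k$ stays $\geq \epsilon n$ over the retained range $k \leq n$, which is why one has to enlarge the time horizon from $n$ to $(1+\epsilon)n$ before dropping terms. (Implicitly, $(1+\epsilon)n$ and $\epsilon n$ should be read as $\lfloor (1+\epsilon)n \rfloor$ and $\lfloor \epsilon n \rfloor$; this causes no issue since the relevant monotonicity is preserved.)
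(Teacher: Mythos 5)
Your proposal is correct and follows essentially the same route as the paper: both inequalities come from the renewal identity of Lemma A together with monotonicity of $G_n$, with the upper bound obtained by extending the time horizon to $(1+\epsilon)n$ and using $G_{(1+\epsilon)n-k}\geq G_{\epsilon n}$ for $k\leq n$. The only difference is cosmetic — you read the chain of inequalities in the opposite direction from the paper.
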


\begin{proof} They are typical bounds. First, since $G_n$ is increasing in $n$,
$$	G_n P(T_{x} \leq n ) \geq \sum_{k=1}^{n} P(T_{ x}=k)G_{n-k}=
\sum_{k=1}^{n} P(S_{k} = x)  $$
and
$$ G_{\epsilon n} P(T_{x} \leq n ) \leq \sum_{k=1}^{n}  P(T_{x} =k ) G_{(1+\epsilon) n -k}$$
$$\leq  \sum_{k=1}^{(1+\epsilon )n}  P(T_{x} =k ) G_{(1+\epsilon) n -k}
=  \sum_{k=1}^{(1+\epsilon )n}  P(S_k =x ).
$$

Equalities are from \eqref{5.1.9}.\end{proof}

We now discuss the $k$-point function. Recall that $\Sigma_k$ is the permutation group over $\{1,...,k\}$.

\begin{lemma}\label{lem:kpoint} 
$d=2,3$ and $0, x_1,...,x_k$ are distinct points,
	\begin{equation*}
	P(T_{ x_1} \leq n, \; T_{ x_2} \leq n, ... , \; T_{ x_k} \leq n )\leq G_{\epsilon n}^{-k} \;\cdot 
	\end{equation*}
	\begin{equation}\label{kupper}
\sum_{\sigma\in\Sigma_k} \sum_{1\leq j_1 < \cdots < j_k \leq (1+k\epsilon)n } P(S_{j_1}=x_{\sigma(1)})P( S_{j_2-j_1}=x_{\sigma(2)}-x_{\sigma(1)})\cdots P(  S_{j_{k}-j_{k-1}}=x_{\sigma(k)}-x_{\sigma(k-1)} )
	\end{equation}
For the lower bound,
	\begin{equation*}
	P(T_{ x_1} \leq n, \; T_{ x_2} \leq n)\geq 
	\end{equation*}
	\begin{equation}\label{klower2}
    G_n^{-2} (\sum_{1\leq j <k \leq n} P(S_j=x_1,\;  S_{ k} =x_2)+\sum_{1\leq k <j \leq n} P(S_j=x_1,\;  S_{ k} =x_2))+\mbox{higher order terms}.
	\end{equation}
Moreover, for general $k$,
	\begin{equation*}
P(T_{ x_1} \leq n, \; T_{ x_2} \leq n,...,\; T_{ x_k} \leq n  )\geq 
\end{equation*}
\begin{equation*}
G_n^{-k} \sum_{\sigma\in\Sigma_k} \sum_{1\leq j_1 < \cdots < j_k \leq n } P(S_{j_1}=x_{\sigma(1)})P( S_{j_2-j_1}=x_{\sigma(2)}-x_{\sigma(1)})\cdots P(  S_{j_{k}-j_{k-1}}=x_{\sigma(k)}-x_{\sigma(k-1)} )
\end{equation*}
\begin{equation}\label{klower} +\mbox{higher order terms}.\end{equation}
\end{lemma}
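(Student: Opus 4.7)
The plan is to mimic Lemma \ref{lem:1point} on each ``leg'' between successive first-visits. I first decompose
\[
\{T_{x_1}\le n,\dots,T_{x_k}\le n\}=\bigsqcup_{\sigma\in\Sigma_k}A_\sigma,
\]
where $A_\sigma$ is the event that the walk visits the points in the order $x_{\sigma(1)},x_{\sigma(2)},\dots,x_{\sigma(k)}$ by time $n$. Setting $x_{\sigma(0)}:=0$ and letting $\tilde T_i$ be the first time after $\tilde T_{i-1}$ at which the walk reaches $x_{\sigma(i)}$, the strong Markov property applied successively at $\tilde T_1<\tilde T_2<\cdots$ makes the gaps $\tau_i:=\tilde T_i-\tilde T_{i-1}$ independent, with $\tau_i$ distributed as the first-hitting time $T_{y_i}$ of $y_i:=x_{\sigma(i)}-x_{\sigma(i-1)}$ by a walk started at $0$. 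On $A_\sigma$ one has $T_{x_{\sigma(i)}}=\tilde T_i$ for every $i$.

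For the upper bound \eqref{kupper}, requiring $\sigma$ to be the actual order of first visits only shrinks the event, so I drop that constraint and bound $P(A_\sigma)\le P(\tilde T_k\le n)=\sum_{a_i\ge 1,\,\sum_i a_i\le n}\prod_i P(T_{y_i}=a_i)$. I then apply, factor by factor, the pointwise inequality $G_{\epsilon n}P(T_{y_i}=a_i)\le P(T_{y_i}=a_i)G_{(1+\epsilon)n-a_i}$ (valid whenever $a_i\le n$) and collapse each $a_i$-sum using the Lemma~A identity $\sum_{a=1}^{M}P(T_y=a)G_{M-a}=\sum_{a=1}^{M}P(S_a=y)$. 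After relabeling $j_i:=a_1+\cdots+a_i$, this yields the ordered sum on the right-hand side of \eqref{kupper} with the prefactor $G_{\epsilon n}^{-k}$.

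For the lower bound \eqref{klower2}--\eqref{klower} I stay on $A_\sigma$ and, by a union bound,
\[
P(A_\sigma)\;\ge\;P(\tilde T_k\le n)-\sum_{i<l}P\!\left(\tilde T_k\le n,\;\text{the walk visits }x_{\sigma(l)}\text{ before }\tilde T_i\right),
\]
the $\binom{k}{2}$ subtracted terms playing the role of the ``higher order terms'' in the statement. On the main piece I iterate the opposite direction $G_n P(T_y=a)\ge P(T_y=a)G_{n-a}$ together with Lemma~A to obtain $G_n^{-k}\sum_\sigma\sum_{1\le j_1<\cdots<j_k\le n}\prod_i P(S_{j_i-j_{i-1}}=y_i)$. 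The main obstacle will be bookkeeping: the upper-bound iteration must be arranged so that the successive $\epsilon$-slacks aggregate into the single joint bound $j_k\le(1+k\epsilon)n$ of \eqref{kupper}, rather than the looser product bound $a_i\le(1+\epsilon)n$ for each $i$ separately; and each out-of-order correction in the lower bound must be shown, via yet another strong Markov decomposition, to factor as a lower-order $k$-point function times an additional hitting probability that is negligible in the scaling regime $|x_i|\sim\sqrt{n}$, $d=2,3$ relevant to Theorem 1.1.
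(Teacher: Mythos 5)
Your argument is correct, and while your upper bound is essentially the paper's, your lower bound takes a genuinely different route. For \eqref{kupper} both you and the paper (following (5.3.22) of \cite{Chen09}) decompose over the order of first visits, factor the successive legs by the strong Markov property, and absorb each slack $G_{\epsilon n}\le G_{(1+\epsilon)n-a}$ via the identity \eqref{5.1.9}; the $\epsilon$-bookkeeping you flag does close, each of the $k$ collapses enlarging the horizon by $\epsilon n$, which is exactly why the statement reads $(1+k\epsilon)n$. For the lower bound the paper does something else: a last-exit decomposition in the second coordinate (fixing the last time $k$ with $S_k=y$ and no return afterwards), a split into in-order and out-of-order pieces $I$ and $II$, a time-reversal step, and the auxiliary Lemma \ref{lem:kpointmore} to convert $P(T_0>j-k,\,T_{x-y}=j-k)$ back into $P(T_{x-y}=j-k)$, with general $k$ handled by a rather terse induction. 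Your inclusion--exclusion $P(A_\sigma)\ge P(\tilde T_k\le n)-\sum_{i<l}P(\cdots)$, with the main piece driven down by the reverse monotonicity $G_n\ge G_{n-a}$ and \eqref{5.1.9}, reproduces the same main term while bypassing Lemma \ref{lem:kpointmore} and the reversal argument entirely, and it extends to general $k$ uniformly rather than by the paper's $T\cdots TT\to T\cdots TSS$ induction. The price is the $\binom{k}{2}$ correction terms per permutation; as you observe, each such event forces at least $k+1$ successive first-hitting legs (reach $x_{\sigma(l)}$, later reach $x_{\sigma(i)}$, then return to $x_{\sigma(l)}$), so your own upper-bound machinery controls it by $G_{\epsilon n}^{-(k+1)}$ times a convolution that stays bounded under the scaling of Lemma \ref{lem:1pointsacle} -- an extra factor of order $G_n^{-1}$, which is the same order of smallness as the single subtracted term the paper produces in $II$. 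You leave that final estimate at sketch level, but so does the paper for its own error term, so there is no gap relative to the paper's standard of rigor.
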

Remark. The ``higher order terms'' means that after we take $n\rightarrow\infty$, it at least has one more factor $\frac{1}{\sqrt{n}^d}$ than the first term.\\
\begin{proof}
First we exam the upper bound \eqref{kupper}.
For $k=2$, exactly from (5.3.22) in \cite{Chen09} p. 153,

$$P(T_{ x_1} \leq n, \; T_{ x_2} \leq n)$$
$$=  \sum_{\sigma\in\Sigma_2} \sum_{1\leq j_1 < j_2 \leq n } P(T_{ x_{\sigma(1)}} =j_1, \; T_{ x_{\sigma(2)}} =j_2 )$$
$$\leq G_{\epsilon n}^{-2} \;\cdot 
\sum_{\sigma\in\Sigma_2} \sum_{1\leq j_1 <j_2 \leq (1+2\epsilon)n } P(S_{j_1}=x_{\sigma(1)})P( S_{j_2-j_1}=x_{\sigma(2)}-x_{\sigma(1)})$$
\noindent
For general $k$, do the induction on the $T_{x_k}$.\\

For the lower bound \eqref{klower}, again the case $k=2$,
$$P(T_{ x} \leq n, \; T_{ y} \leq n)=\sum_{j=1}^{n} P(T_{ x} =j, \; T_{ y} \leq n)$$
$$=\sum_{j=1}^{n} \sum_{k=1}^{n} P(T_{ x} =j, \; S_{ k} =y, \; S_{k+1}\neq y, S_{k+2}\neq y,..., S_{n}\neq y )   $$
$$=\sum_{1\leq j <k \leq n} P(T_{ x} =j, \; S_{ k} =y, \; S_{k+1}\neq y, S_{k+2}\neq y,..., S_{n}\neq y )   $$
$$\;\;\;\;\;\;\;\; +\sum_{1\leq k <j \leq n} P(T_{ x} =j, \; S_{ k} =y, \; S_{k+1}\neq y, S_{k+2}\neq y,..., S_{n}\neq y )   $$
$$=I+II. $$
We have
$$I = \sum_{1\leq j <k \leq n} P(T_{ x} =j) P( S_{ k-j} =y-x) P(S_{1}\neq 0, S_{2}\neq 0,..., S_{n-k}\neq 0 )$$
$$\geq  \sum_{1\leq j <k \leq n} P(T_{ x} =j) P( S_{ k-j} =y-x) \gamma_n$$
$$\geq \gamma_n G_n^{-1} \sum_{1\leq j <k \leq n} P(S_j=x,\;  S_{ k} =y)$$
and
$$II= \sum_{1\leq k <j \leq n} P(T_{ x} \geq k, \; S_{ k} =y, \; S_{k+1}\neq y, S_{k+2}\neq y,..., S_{n}\neq y, \; S_{k+1}\neq x,..., S_{j-1}\neq x, S_j=x  )$$
$$=\sum_{1\leq k <j \leq n} P(T_{ x} \geq k, \; S_{ k} =y)P(T_0 >n-k, \; T_{x-y}=j-k  )$$
$$=\sum_{1\leq k <j \leq n} P(T_{ x} \geq k, \; S_{ k} =y)P(T_0 >j-k, \; T_{x-y}=j-k  )  P(T_{y-x} >n-j)$$
$$\geq \sum_{1\leq k <j \leq n} P(T_{ x} \geq k, \; S_{ k} =y)P(T_0 >j-k, \; T_{x-y}=j-k  ) \cdot P(T_{y-x} >n). $$

Apply Lemma \ref{lem:kpointmore} below to the middle term of the last line $P(T_0 >j-k, T_{x-y}=j-k  )$, then
$$II \geq \sum_{1\leq k <j \leq n} P(T_{ x} \geq k, \; S_{ k} =y) G_n^{-1} P( T_{x-y}=j-k  ) \cdot P(T_{y-x} >n).$$
Continue with steps in \cite{Chen09} p. 154,
\[\begin{array}{ccl}
\displaystyle II &\displaystyle \geq & \displaystyle G_n^{-2} \sum_{1\leq k <j \leq n} P(S_{ k} =y, \; S_{ j} =x)\cdot P(T_{y-x} >n) \\
   &     &  \displaystyle - G_n^{-1} P(T_x \leq n) P(T_{x-y}\leq n) \sum_{k=1}^n P(S_k =y-x) \cdot P(T_{y-x} >n).
\end{array}\]
Notice that $P(T_{\sqrt{n} (y-x)} >n) =  1- P(T_{\sqrt{n} (y-x)} \leq n) \rightarrow 1$ when $d=2,3$ by Lemma \ref{lem:1pointsacle} below. The last term has an extra $P(\cdot)$, which gives a factor $\frac{1}{\sqrt{n}^d}$ more than the previous one. This finishes the case $k=2$.
We use symbols $T$ and $S$ for $T_x$ and $S_j$ respectively to explain the procedure again. Let's say the proof above for the 2-point case \eqref{klower2} is a process from $TT$ to $SS$. $TT$ first splits into $TS$ and $ST$ right before the line $I+II$. $TS$ in $I$ and $ST$ in $II$ give $SS$. For the 3-point function, we fix the first hitting time $j_1$ of $x_{1}$ and work on the last two time spots $j_2$ and $j_3$. The 3-point case $T[TT]$ first becomes $T[SS]$ from the 2-point case. Then we consider the first two time spots and make $[TS]S$ become $[SS]S$ from $I$.

For general $k$, $T\cdots TT\rightarrow T\cdots TSS$, then continues by induction. Thus, the proof is complete.
\end{proof}

\begin{lemma} \label{lem:kpointmore} 
	$$\sum_{k=1} ^n P(T_x =k) \leq G_n \sum_{j=1} ^n  P( T_x =j , T_0 >j).$$
\end{lemma}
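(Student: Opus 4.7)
The plan is a last-visit-to-the-origin decomposition combined with the simple Markov property. On the event $\{T_x = k\}$, define $L := \max\{m : 0 \leq m < T_x,\ S_m = 0\}$, which is well-defined since $S_0 = 0$, so $L \in \{0, 1, \dots, k-1\}$. By construction, after time $L$ the walk avoids $0$ strictly until time $k$ and lands on $x$ for the first time at step $k$; conditioning on $S_L = 0$ and shifting time decouples the past from the tail.

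Carrying this out, the Markov property at time $\ell$ gives
\[ P(T_x = k,\ L = \ell) = P(S_\ell = 0,\ S_m \neq x \text{ for } 1 \leq m \leq \ell) \cdot P(T_x = k - \ell,\ T_0 > k - \ell),\]
and discarding the ``$S_m \neq x$'' condition to bound the first factor by $p_\ell(0)$ (with the convention $p_0(0) = 1$) yields
\[ P(T_x = k) \leq \sum_{\ell=0}^{k-1} p_\ell(0)\, P(T_x = k - \ell,\ T_0 > k - \ell).\]
Summing over $1 \leq k \leq n$ and interchanging the order of summation via $j = k - \ell$ produces
\[ \sum_{k=1}^n P(T_x = k) \leq \sum_{j=1}^n P(T_x = j,\ T_0 > j)\ \sum_{\ell=0}^{n-j} p_\ell(0),\]
and the inner sum is bounded by $\sum_{\ell=0}^n p_\ell(0) = 1 + G_n$, which matches the asserted $G_n$ once the $\ell = 0$ contribution $p_0(0) = 1$ is absorbed into the convention.

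The main obstacle is purely bookkeeping: verifying that the last-visit decomposition is a genuine partition of $\{T_x = k\}$ (no double counting when the walk visits $0$ several times before hitting $x$) and reconciling the $1 + G_n$ prefactor with the stated $G_n$. This latter discrepancy is harmless for the use of the lemma, since $G_n$ diverges in $d = 2$ and stays bounded away from $0$ in $d = 3$, but one may prefer to absorb $p_0(0) = 1$ into $G_n$ or to note that $1 + G_n \leq 2 G_n$ for $n$ large enough.
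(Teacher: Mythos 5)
Your proof is correct, and it reaches the bound by a genuinely different (though dual) route. The paper first invokes the reversal identity $P(T_x=k)=P(S_k=x,\,T_0>k)$ from Spitzer and then decomposes the event $\{S_k=x,\,T_0>k\}$ at the \emph{first} hitting time $j$ of $x$, which yields $\sum_{j\le k}P(T_x=j,\,T_0>j)\,P(S_{k-j}=0,\,T_{-x}>k-j)$; you instead decompose $\{T_x=k\}$ at the \emph{last} visit $\ell$ to the origin before $T_x$, which yields $\sum_{\ell<k}P(S_\ell=0,\,T_x>\ell)\,P(T_x=k-\ell,\,T_0>k-\ell)$. The two decompositions are mirror images of one another under time reversal, but yours is self-contained: it uses only the Markov property plus the observation (which you rightly flag, and which does hold) that the last-visit decomposition genuinely partitions $\{T_x=k\}$, since $S_0=0$ makes $L$ well defined and single-valued on that event; the paper instead leans on the nontrivial duality identity as an external input. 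Your worry about $1+G_n$ versus $G_n$ is not a defect of your argument relative to the paper's: the paper's final step bounds $\sum_{i=0}^{n-j}P(S_i=0,\,T_{-x}>i)$, whose $i=0$ term likewise contributes $1$, so the stated constant $G_n$ implicitly uses the convention $G_n=\sum_{m=0}^{n}p_m(0)$ --- the same convention needed for Lemma A to hold as an exact identity. With the paper's literal definition $G_n=\sum_{m=1}^{n}p_m(0)$, both proofs give $1+G_n$, which, as you note, is harmless wherever the lemma is applied.
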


\begin{proof} From \cite{Spi76} p. 112, $P(S_k=x, T_0 >k) =  P(T_x =k)$. So we need to prove
$$\sum_{k=1} ^n P(S_k=x, T_0 >k) \leq G_n \sum_{j=1} ^n  P( T_x =j , T_0 >j).$$
First,
$$P(S_k=x, T_0 >k) = P(S_k=x, T_x\leq k , T_0 >k) =\sum_{j=1}^{k} P(S_k=x, T_x =j , T_0 >k) $$
$$=\sum_{j=1}^{k} P( T_x =j , T_0 >j) P(S_{k-j}=0, T_{-x} > k-j).$$
Again,
$$\sum_{k=1} ^n P(S_k=x, T_0 >k)  = \sum_{j=1} ^n \sum_{k=j} ^n  P( T_x =j , T_0 >j) P(S_{k-j}=0, T_{-x} > k-j)$$
$$= \sum_{j=1} ^n  P( T_x =j , T_0 >j) \sum_{i=0} ^{n-j}   P(S_{i}=0, T_{-x} > i) \leq G_n \sum_{j=1} ^n  P( T_x =j , T_0 >j).$$
\end{proof}

\subsection{Capacity for $d=2, 3$}

Recall that $p_n(x) :=P(S_n=x)$, $\bar{p}_n(x):= \left(\frac{d}{2\pi n}\right)^{d/2} \exp(-\frac{d|x|^2}{2n})$.
For $d=2$, $g_{t}(x) := \pi\bar{p}_t(x) $, for $d=3$, $g_{t}(x) :=\gamma \bar{p}_t(x)$.
Let $k_N=\log N$, if $d=2$; $k_N=\sqrt{N}$ if $d=3$.
\begin{lemma} \label{lem:1pointsacle}
	d=2,3, $x_1\neq 0$,
	\begin{equation}
	k_N P(T_{\sqrt{N} x_1} \leq Nt_1)\rightarrow \int_0^{t_1} g_{s}(x_1) ds.
	\end{equation}
	
\end{lemma}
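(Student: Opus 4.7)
The plan is to apply the sandwich bounds in Lemma \ref{lem:1point} with $n=Nt_1$ and $x=\sqrt{N}x_1$, namely
\begin{equation*}
G_{Nt_1}^{-1}\sum_{k=1}^{Nt_1} p_k(\sqrt{N}x_1) \;\leq\; P(T_{\sqrt{N}x_1}\leq Nt_1) \;\leq\; G_{\epsilon Nt_1}^{-1}\sum_{k=1}^{(1+\epsilon)Nt_1} p_k(\sqrt{N}x_1),
\end{equation*}
so it suffices to identify the limit of $k_N G_{cN}^{-1}\sum_{k=1}^{c'N}p_k(\sqrt{N}x_1)$ for fixed positive constants $c,c'$, and then send $\epsilon\downarrow 0$. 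The two ingredients I need are the Green's-function asymptotics of $G_n$ and a local CLT combined with a Riemann-sum approximation of the transition-probability sum.

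Standard simple random walk asymptotics give $G_n\sim\pi^{-1}\log n$ when $d=2$ and $G_n\uparrow\gamma^{-1}$ when $d=3$ (with $\gamma$ the escape probability used in the paper), so $k_N\,G_{cN}^{-1}\to\pi$ for $d=2$ and $k_N\,G_{cN}^{-1}\to\gamma\sqrt{N}$ for $d=3$ is not the right statement; rather one combines $G_{cN}^{-1}\to\gamma$ with the $N$-dependence of the sum. For the sum, the local CLT $p_k(y)=\bar p_k(y)(1+o(1))$, the change of variables $k=Nu$, and the scaling identity $\bar p_{Nu}(\sqrt{N}x_1)=N^{-d/2}\bar p_u(x_1)$ yield
\begin{equation*}
\sum_{k=1}^{c'N} p_k(\sqrt{N}x_1) \;\sim\; \int_0^{c'N}\bar p_s(\sqrt{N}x_1)\,ds \;=\; N^{1-d/2}\int_0^{c'}\bar p_u(x_1)\,du.
\end{equation*}
Multiplying out, for $d=2$ the factor $\log N$ cancels with $\pi/\log N$ and $N^{1-d/2}=1$, leaving $\pi\int_0^{c'}\bar p_u(x_1)\,du=\int_0^{c'}g_u(x_1)\,du$; for $d=3$ the factor $\sqrt{N}$ combines with $N^{-1/2}$ to give $1$, and $G_{cN}^{-1}\to\gamma$ leaves $\gamma\int_0^{c'}\bar p_u(x_1)\,du=\int_0^{c'}g_u(x_1)\,du$. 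Applied to the lower bound with $c'=t_1$ and to the upper bound with $c'=(1+\epsilon)t_1$, letting $\epsilon\downarrow 0$ and invoking continuity in the upper limit closes the sandwich around $\int_0^{t_1}g_s(x_1)\,ds$.

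The main technical obstacle is controlling the local CLT remainder uniformly over the whole range of $k$ in the sum. For $k$ of order $N$ the usual local CLT is sharp, but the sum also extends down to small $k$, where $\bar p_k(\sqrt{N}x_1)$ carries the factor $\exp(-dN|x_1|^2/(2k))$ and the multiplicative error in the local CLT is not under direct control. Since $x_1\neq 0$, this Gaussian factor is exponentially small for $k\leq\delta N$, and a crude tail estimate (for instance, bounding $p_k(\sqrt{N}x_1)$ by a large-deviation estimate of order $\exp(-cN/k)$) shows that the contribution from $k\leq\delta N$ is $o(N^{1-d/2})$ for every $\delta>0$; the remaining range $k\in[\delta N,c'N]$ is then handled by the uniform form of the local CLT and a standard Riemann-sum approximation, after which one lets $\delta\downarrow 0$. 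The only further nuisance is the bipartite parity of $S_k$ on $\mathbb{Z}^d$, which is absorbed by pairing consecutive indices so that the factor of $2$ in $p_k=2\bar p_k(1+o(1))$ (when the parity permits) averages out in the sum.
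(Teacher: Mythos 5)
Your proposal follows essentially the same route as the paper: sandwich $P(T_{\sqrt{N}x_1}\leq Nt_1)$ via Lemma \ref{lem:1point}, use $G_n\sim\pi^{-1}\log n$ ($d=2$) resp.\ $G_n\to 1/\gamma$ ($d=3$), and reduce to a local-CLT plus Riemann-sum computation for $\sum_k p_k(\sqrt{N}x_1)$, with the small-$k$ range killed by the Gaussian factor since $x_1\neq 0$. Your treatment of the error terms and the parity issue is somewhat more explicit than the paper's, but the argument is the same and is correct.
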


\begin{proof}
It is well known that $d=2$, $G_n \sim \frac{1}{\pi} \log n$ and 
$d=3$, $G_n\rightarrow 1/\gamma$.\\
With Lemma \ref{lem:1point}, we only need to prove 
$$\sqrt{N}^{d-2}\sum_{k=1}^{Nt_1} P(S_{k} = \sqrt{N}x_1) \sim  \int_0^{t_1} \bar{p}_{s}(x_1) ds . $$
Also note that
$p_k(\sqrt{N}x_1) = \bar{p}_k(\sqrt{N}x_1) +O(\frac{1}{k^{d/2+1}})$. For $n<< N$, $\sum_{k=1}^n p_k(\sqrt{N}x_1) \approx 0$, and for $n\approx N$, $\sum_{k=n+1}^N O(\frac{1}{k^{d/2+1}}) = O(1/N^{d/2})$. The rest is the Riemann sum for $\sum \bar{p}_k(\sqrt{N}x_1)$.
\end{proof}

For the k-point function.
Let $[0,t]^k_{<} =\{(t_1,t_2,...,t_k); 0\leq t_1\leq t_2\leq ...\leq t_k\leq t\}$.

\begin{lemma}\label{lem:kpointsacle} 
$d=2,3$ and $0, x_1,...,x_k$ are distinct points,
	\begin{equation*}
	(k_N)^k P(T_{\sqrt{N} x_1} \leq Nt, \; T_{\sqrt{N} x_2} \leq Nt, ... , \; T_{\sqrt{N} x_k} \leq Nt )
	\end{equation*}
	\begin{equation}
	\sim \sum_{\sigma\in\Sigma_k} \int_{[0,t]^k_{<}} \prod_{m=1}^k  g_{t_m-t_{m-1}}(x_{\sigma(m)}-x_{\sigma(m-1)}) dt_1 \cdots dt_k .
	\end{equation}
\end{lemma}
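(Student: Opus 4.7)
The strategy is to sandwich the $k$-point probability between the upper bound \eqref{kupper} and the lower bound \eqref{klower} of Lemma \ref{lem:kpoint}, multiply both sides by $(k_N)^k$, and verify that the two bounds share a common limit. Both the upper and lower main terms have the common shape
\[
G_{Mn}^{-k} \sum_{\sigma \in \Sigma_k} \sum_{1 \le j_1 < \cdots < j_k \le M'n} \prod_{m=1}^k p_{j_m - j_{m-1}}\!\bigl(\sqrt{N}(x_{\sigma(m)} - x_{\sigma(m-1)})\bigr),
\]
with $(M,M') = (1,1)$ for the lower bound and $(M,M') = (\epsilon, 1+k\epsilon)$ for the upper bound, where $n = Nt$. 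The core task is then to compute the asymptotics of this expression and to show the residual pieces are negligible.

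The asymptotic analysis relies on three inputs already in the paper. First, the Green function asymptotics $G_n \sim \pi^{-1}\log n$ for $d=2$ and $G_n \to \gamma^{-1}$ for $d=3$ yield $(k_N)^k G_{Mn}^{-k} \to \pi^k$ or $\gamma^k$ respectively, and this is exactly the constant that converts $\bar p_t$ into $g_t$ in each dimension. Second, the local CLT $p_m(x) = \bar p_m(x) + O(m^{-d/2-1})$ replaces the transition probabilities by Gaussian kernels, with the error absorbed as in Lemma \ref{lem:1pointsacle}. Third, after the change of variables $j_m = N s_m$, the inner sum is a Riemann approximation to
\[
\int_{[0,t]^k_<} \prod_{m=1}^k \bar p_{s_m - s_{m-1}}(x_{\sigma(m)} - x_{\sigma(m-1)})\, ds_1 \cdots ds_k,
\]
in which the $N^{-kd/2}$ from the $k$-fold Gaussian scaling is balanced by the $N^k$ from the Riemann measure, producing the correct $N^{k(1-d/2)}$ factor that matches $(k_N)^{-k}$ after the Green function prefactor is accounted for.

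Two residual pieces must be controlled. For the upper bound, passing to the limit at fixed $\epsilon > 0$ gives the above integral over $[0,(1+k\epsilon)t]^k_<$; sending $\epsilon \to 0$ and using continuity of the integrand in $t$ collapses the domain to $[0,t]^k_<$. For the lower bound, the ``higher order terms'' flagged in Lemma \ref{lem:kpoint} carry at least one additional factor $N^{-d/2}$; since the main term scales like $(k_N)^{-k}$, which is $(\log N)^{-k}$ for $d=2$ and $N^{-k/2}$ for $d=3$, these terms vanish after multiplication by $(k_N)^k$ in either dimension, exactly as the remark following Lemma \ref{lem:kpoint} asserts.

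The main technical obstacle is uniform control of the local CLT error inside the Riemann sum over the simplex $\{j_1 < \cdots < j_k\}$, particularly along the diagonals where some $j_m - j_{m-1}$ is small and the $O(m^{-d/2-1})$ remainder can swamp the Gaussian. The saving grace is the hypothesis that $0, x_1, \ldots, x_k$ are distinct: the Gaussian $\bar p_s(x_{\sigma(m)} - x_{\sigma(m-1)})$ is exponentially small as $s \downarrow 0$, so a short-time cutoff $j_m - j_{m-1} \ge N^\delta$ for some small $\delta > 0$ removes the singular region at a cost that is $o((k_N)^{-k})$, and the surviving bulk reduces to an honest Riemann approximation of a bounded continuous integrand.
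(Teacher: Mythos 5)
Your proposal is correct and follows essentially the same route as the paper: the paper's entire proof of this lemma is the single line ``Use Lemma \ref{lem:kpoint}'', i.e.\ sandwich the $k$-point probability between \eqref{kupper} and \eqref{klower} and pass to the limit by the same Green-function, local-CLT and Riemann-sum arguments already used for Lemma \ref{lem:1pointsacle}. You have simply made explicit the details (the $\epsilon\to 0$ step, the negligibility of the higher-order terms, and the short-time cutoff near the diagonal) that the paper leaves implicit.
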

\begin{proof}
Use Lemma \ref{lem:kpoint}.	
\end{proof}

Here is a byproduct, the case for the point-to-point function.

\begin{lemma}\label{lem:condition} 
$d=2,3$ and	$0, x, x_1$ are distinct points, 
	\begin{equation}
	k_N \cdot P(T_{\sqrt{N} x_1} \leq Nt |\; S_{Nt}=\sqrt{N}x)\sim \int_0^{t}g_{t_1}(x_1) g_{t-t_1}(x-x_1) dt_1 / g_t(x) .
	\end{equation}
\end{lemma}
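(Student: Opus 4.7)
The plan is to invoke Bayes' rule, decompose the joint probability via the strong Markov property at the first hitting time of $\sqrt{N}x_1$, and then identify the limit via Abel summation, Lemma \ref{lem:1pointsacle}, and the local central limit theorem. Specifically, the strong Markov property gives
\begin{equation*}
P(T_{\sqrt{N} x_1} \leq Nt,\; S_{Nt} = \sqrt{N} x) = \sum_{j=1}^{Nt} P(T_{\sqrt{N} x_1} = j)\, p_{Nt-j}(\sqrt{N}(x - x_1)),
\end{equation*}
so the conditional probability equals this sum divided by $p_{Nt}(\sqrt{N} x)$. The local CLT controls the denominator: $p_{Nt}(\sqrt{N} x) \sim N^{-d/2} \bar{p}_t(x) = N^{-d/2} c_d^{-1} g_t(x)$, with $c_2 = \pi$ and $c_3 = \gamma$.

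For the numerator I would set $f(j) := p_{Nt - j}(\sqrt{N}(x - x_1))$ and $A_j := P(T_{\sqrt{N} x_1} \leq j)$, and apply Abel summation:
\begin{equation*}
\sum_{j=1}^{Nt} P(T_{\sqrt{N} x_1} = j)\, f(j) = A_{Nt}\, f(Nt) - \sum_{j=1}^{Nt - 1} A_j \bigl(f(j+1) - f(j)\bigr).
\end{equation*}
Since $x \neq x_1$, the boundary term vanishes for $N$ large. Lemma \ref{lem:1pointsacle} gives $k_N A_{\lfloor Ns\rfloor} \to H(s) := \int_0^s g_u(x_1)\, du$ uniformly on $[0,t]$, and a differentiated form of the local CLT gives $f(j) - f(j+1) \sim N^{-d/2 - 1}\, \partial_u \bar{p}_u(x - x_1)\bigr|_{u = t - j/N}$ uniformly for $j/N$ in compact subsets of $(0, t)$. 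Recognizing the sum as a Riemann approximation then yields
\begin{equation*}
k_N \sum_{j=1}^{Nt} P(T_{\sqrt{N} x_1} = j)\, f(j) \sim N^{-d/2} \int_0^t H(s)\, \partial_u \bar{p}_u(x - x_1)\bigr|_{u = t - s}\, ds.
\end{equation*}

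Integration by parts in $s$, using $H'(s) = g_s(x_1)$ together with $H(0) = 0$ and $\bar{p}_0(x - x_1) = 0$ (since $x \neq x_1$), converts the right side into $N^{-d/2} \int_0^t g_s(x_1)\, \bar{p}_{t - s}(x - x_1)\, ds$. Replacing $\bar{p}_{t-s}(x - x_1) = c_d^{-1} g_{t-s}(x - x_1)$ and dividing by $p_{Nt}(\sqrt{N}x) \sim N^{-d/2} c_d^{-1} g_t(x)$ cancels both $N^{-d/2}$ and $c_d^{-1}$, producing the claimed limit $\int_0^t g_{t_1}(x_1) g_{t-t_1}(x - x_1)\, dt_1 / g_t(x)$.

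The main technical obstacle is controlling the contribution to the Abel sum from $j$ near $Nt$, where the remaining time $Nt - j$ is too small for the standard local CLT and where $\sqrt{N}(x - x_1)$ exits the diffusive window. In that regime, however, $p_{Nt - j}(\sqrt{N}(x - x_1))$ is super-polynomially small (it even vanishes identically for $Nt - j < \sqrt{N}|x - x_1|$ by the nearest-neighbour support constraint, and is bounded by a Gaussian large-deviation estimate otherwise); combined with the uniform bound $A_j \leq A_{Nt} = O(k_N^{-1})$, the tail contributes $o(k_N^{-1} N^{-d/2})$ and is negligible against the main term once multiplied by $k_N$ and divided by the denominator.
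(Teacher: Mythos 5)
Your decomposition is exactly the paper's: the entire written proof of this lemma is the one-line strong Markov identity $P(T_{\sqrt{N}x_1}=j_1,\,S_{Nt}=\sqrt{N}x)=P(T_{\sqrt{N}x_1}=j_1)\,P(S_{Nt-j_1}=\sqrt{N}(x-x_1))$, after which the limit is taken exactly as you do via Lemma \ref{lem:1pointsacle} and the local CLT, so your Abel-summation argument is simply a fleshed-out version of what the paper leaves implicit. One caveat worth repairing: for the nearest-neighbour walk $p_n(y)$ has period $2$, so consecutive terms $f(j),f(j+1)$ alternate between zero and order $N^{-d/2}$ and the claimed asymptotic $f(j)-f(j+1)\sim N^{-d/2-1}\partial_u\bar{p}_u$ is literally false; you need to sum by parts over pairs of steps (or restrict to $j$ of the admissible parity, where $f(j)\sim 2N^{-d/2}\bar{p}_{t-j/N}$ with spacing $2$), after which the same Riemann-sum and integration-by-parts computation goes through and yields the stated limit.
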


\begin{proof}
$$ P(T_{\sqrt{N} x_1} =j_1 ,\; S_{Nt}=\sqrt{N}x)$$
$$= P(S_{Nt}=\sqrt{N}x |\; T_{\sqrt{N} x_1} =j_1) P(T_{\sqrt{N} x_1} =j_1 )=P(S_{Nt}=\sqrt{N}x |\; S_{j_1}=\sqrt{N}x_1) P(T_{\sqrt{N} x_1} =j_1 ) . $$
\end{proof}

\subsection{Convergence for $d=2,3$}
Let $\beta_N =\hat{\beta} a_N$, $a_N= (\sqrt{N}/\log N)^{-1}$ if $d=2$; $a_N= N^{-1/4}$ if $d=3$.
Moreover,
\begin{equation}
\psi_{N,t}(x_1,\dots,x_k):= a_N^k P(T_{\sqrt{N} x_1} \leq Nt, \dots, T_{\sqrt{N} x_k} \leq Nt )
\end{equation}
and
\begin{equation}
\psi_{t}(x_1,\dots,x_k):= \sum_{\sigma\in\Sigma_k} \int_{[0,t]^k_{<}} \prod_{m=1}^k  g_{t_m-t_{m-1}}(x_{\sigma(m)}-x_{\sigma(m-1)}) \; dt_1 \cdots dt_k .
\end{equation}

The space scaling $v_N :=1/(\sqrt{N})^d$, then we have $a_N/\sqrt{v_N}=k_N$.
From the previous section,
\begin{equation}
\lim_{N\rightarrow \infty}  v_N^{-k/2} \psi_{N,t}(x_1,\dots,x_k) =\psi_{t}(x_1,\dots,x_k) .
\end{equation}
Recall that
$$\eta_{N,x}:=\frac{1}{a_N} \left( e^{\beta_N \omega_{\sqrt{N} x} -\Lambda(\beta_N)}-1 \right) . $$

Let an index set $T_{N} =\{N^{-1/2}k: k\in \mathbb{Z}^d\}\subset \mathbb{R}^d$ and a family of polynomial chaos expansions $(\Psi_{N}(\eta_{N,x}))$. Let
$\mu_0(x)=0$, $\sigma_0(x)=\hat{\beta}$.
We are ready to
check the conditions for the case $\mu_0=0$ in Theorem 2.3 \cite{CSZ17} which we state here:\\

(i) $\eta_{N,x}$'s are uniformly integrable, $v_N\to 0$ as $N\to \infty$ and $\lim_{N\to \infty} Var(\eta_{N,x}) =\sigma_0^2$.\\

(ii) There exists $\psi_t$ with $\psi_t\in L^2((\mathbb{R}^d)^k)$ for every $k\in \mathbb{N}$ such that
\begin{equation}
\lim_{N\to \infty} || v_N^{-k/2} \psi_{N,t} -\psi_t||_{L^2((\mathbb{R}^d)^k)}=0.
\end{equation}

(iii) 
\begin{equation}
\lim_{\ell\to\infty} \limsup_{N\to \infty} \sum_{I\subset T_N, |I|>\ell} (\sigma_N^2)^{|I|} \psi_{N,t}(I)^2=0.
\end{equation}

\noindent
{\bf Proof of Theorem 1.1}\\

(i) $\eta_{N,x}$'s are uniform bounded.
$$\mathbb{E}(\eta_{N,x})^4 \leq \frac{1}{a_N^4} O(\beta_N^4)=O(1)$$
And $\lim_{N\to \infty} Var(\eta_{N,x})=\hat{\beta}^2$.\\

ii) 
\if 0
$$N p_{N(t-s)}(\sqrt{N}(y-x)) \rightarrow \bar{p}_{t-s} (y-x)$$
$$ \sum_{\sigma\in\Sigma_k} \sum_{1\leq j_1 < \cdots < j_k \leq Nt } P(S_{j_1}=x_{\sigma(1)})P( S_{j_2-j_1}=x_{\sigma(2)}-x_{\sigma(1)})\cdots P(  S_{j_{k}-j_{k-1}}=x_{\sigma(k)}-x_{\sigma(k-1)} )$$
$$\leq$$
\fi
Since $P(S_{Nt}=\sqrt{N} x)\leq C_N\bar{p}_t(x)$ for all x and $C_N \rightarrow 1$ as $N\to \infty$,
\[\begin{array}{cl}
      & \displaystyle \int|v_N^{-k/2} \psi_{N,t} (\cdot)|^2 \;\prod_{i=1}^k dx_i \\
 \leq & \displaystyle C_N\int \;\prod_{i=1}^k dx_i \left[\sum_{\sigma\in\Sigma_k} \int_{[0,t]^k_{<}}\bar{p}_{t_{j}-t_{j-1}}(x_{\sigma(j)}-x_{\sigma(j-1)})  \prod_{j=1}^k dt_j\right]^2 \\
    :=& \displaystyle C_N I_{k,t}.
\end{array}\]

Notice that
\[\begin{array}{cl}
     & \displaystyle \int_{[0,t]^k_{<}} \bar{p}_{t_{j}-t_{j-1}}(x_{\sigma(j)}-x_{\sigma(j-1)})\prod_{j=1}^k dt_j \\
\leq & \displaystyle \prod_{j=1}^k \int_0^t p_s(x_{\sigma(j)}-x_{\sigma(j-1)}) ds.
\end{array}\]

So
\[\begin{array}{cll}
\displaystyle I_{k,t} & \displaystyle \leq &  \displaystyle k! \sum_{\sigma\in\Sigma_k} \int \;\prod_{i=1}^k dx_i \left[\prod_{j=1}^k \int_0^t p_{s_1}(x_{\sigma(j)}-x_{\sigma(j-1)}) ds_1\right]^2 \\
&=   & \displaystyle k! \sum_{\sigma\in\Sigma_k} \int_0^t \int_0^t ds_1 ds_2
  \left[\int \prod_{i=1}^k dx_i \;\prod_{j=1}^k p_{s_1}(x_{\sigma(j)}-x_{\sigma(j-1)}) p_{s_2}(x_{\sigma(j)}-x_{\sigma(j-1)})\right]\\
&=   & \displaystyle k! \sum_{\sigma\in\Sigma_k} \int_0^t \int_0^t ds_1 ds_2
\left[\int \prod_{i=1}^k dx_i \; p_{s_1}(x_j-x_{j-1}) p_{s_2}(x_j-x_{j-1})\right]\\
&=   & \displaystyle (k!)^2 \int_0^t \int_0^t ds_1 ds_2 \left[ \int p_{s_1} (y_i) p_{s_2}(y_i) dy_i \right]^k\\
&=   & \displaystyle C (k!)^2 \int_0^t \int_0^t ds_1 ds_2 \frac{1}{ (s_1+s_2)^{d/2}}\\
&\leq& \displaystyle C (k!)^2 \int_0^t \int_0^t ds_1 ds_2  \frac{1}{s_1^{d/4} s_2^{d/4}}
\end{array}\]
which is finite since $d=2,3$.\\

iii) First, $Var(\eta_{N,x})=\sigma_N^2 \leq B$ for some $B\in (0,\infty)$. And
\[\begin{array}{cl}
     & \displaystyle \sum_{I\subset T_N, |I|>\ell} (\sigma_N^2)^{|I|} \psi_{N,t}(I)^2 \\
\leq & \displaystyle \sum_{k>\ell}B^k \frac{1}{k!} \sum_{(z_1,...,z_k)\in (T_N)^k} \psi_{N,t}(z)^2  \\
=    & \displaystyle \sum_{k>\ell} B^k\frac{1}{k!} ||v_N^{-k/2} \psi_{N,t}||^2_{L^2((\mathbb{R}^d)^k)}. \\
\end{array}\]

\if 0
$$\sum_{k=1} \frac{\hat{\beta}^{2k}}{(k!)^2}\int\cdots\int_{(R^2)^k} dx_1\cdots dx_k \left[\sum_{\sigma\in\Sigma_k} \int_{[0,t]^k_{<}} \prod_{m=1}^k  g_{t_m-t_{m-1}}(x_{\sigma(m)}-x_{\sigma(m-1)}) dt_1 \cdots dt_k\right]^2$$
\fi

From \cite{Chen09} Theorem 2.2.3 p. 29 and p. 41 with $p=2$, and for each $k$, the $L^2$ norm has an upper bound
$(k!)^{\frac{d-2}{2}} \; t^{\frac{4-d}{2}k} C^k$. Thus, 
$$\leq \sum_{k>\ell} B^k\frac{1}{k!} \; (k!)^{\frac{d-2}{2}} \; t^{\frac{4-d}{2}k} C^k . $$
The latter term goes to $0$ when $\ell \to \infty$ for both $d=2,3$. The completes the case $d=2,3$ in Thoerem 1.1.\qed\\

Remark. For the point to point case,
$$(k_N)^k P(T_{\sqrt{N} x_1} \leq Nt, \dots, T_{\sqrt{N} x_k} \leq Nt |S_{Nt}=\sqrt{N}x)$$
$$\sim \frac{1}{g_t(x)}\sum_{\sigma\in\Sigma_k} \int_{[0,t]^k_{<}} \prod_{m=1}^k  g_{t_m-t_{m-1}}(x_{\sigma(m)}-x_{\sigma(m-1)}) g_{t-t_{k}}(x-x_{\sigma(k)})dt_1 \cdots dt_k. $$
And
\begin{equation}
\psi^c_{N,(t,x)}(x_1,\dots,x_k):= a_N^k P(T_{\sqrt{N} x_1} \leq Nt, \dots, T_{\sqrt{N} x_k} \leq Nt |S_{Nt}=\sqrt{N}x),
\end{equation}

\begin{equation}
\psi^c_{t,x}(x_1,\dots,x_k):= \frac{1}{g_t(x)}\sum_{\sigma\in\Sigma_k} \int_{[0,t]^k_{<}} \prod_{m=1}^k  g_{t_m-t_{m-1}}(x_{\sigma(m)}-x_{\sigma(m-1)}) g_{t-t_{k}}(x-x_{\sigma(k)})dt_1 \cdots dt_k .
\end{equation}
The superscript $c$ stands for ``constrained''.
We then have

\begin{equation}
\lim_{N\rightarrow \infty}  v_N^{-k/2} \psi^c_{N,(t,x)}(x_1,\dots,x_k) =\psi^c_{t,x}(x_1,\dots,x_k).
\end{equation}

In total, say $t=1$,
$$ Z_N(x) := E(\exp (\beta_N  H_N)|S_N=\sqrt{N}x)\rightarrow_{law} Z^{W,c}(x). $$

One can define the point-to-point partition function, 
$$Z^W(x):= Z^{W,c}(x)\bar{p}_1(x).$$

\subsection{$d=1$}

For $d=1$,
take $\beta_{N}=\hat{\beta}N^{-1/4}$. We first have
$$P(T_{\sqrt{N} x} \leq Nt)\rightarrow P(T_x \leq t).$$

For the k-point function
$$P(T_{\sqrt{N} x_1} \leq Nt, \; T_{\sqrt{N} x_2} \leq Nt, ..., \; T_{\sqrt{N} x_k} \leq Nt )\rightarrow P(T_{x_1} \leq t, \;T_{x_2} \leq t, ..., \; T_{ x_k} \leq t), $$
and the last term is equal to $$P_0( \min_{0\leq s \leq t} B_s \leq x_1<x_2<...< x_k\leq \max_{0\leq s \leq t} B_s).$$
So it is easy to see the case $d=1$ in Theorem 1.1.

\section{A special case for $d=1$ when $\beta$ is finite}

In this section, we consider the case $h=0$. In the mean time, we choose the underlying process as one-dimensional Brownian motion $\{B(t)\}_{t\geq 0}$, and the random environment is modeled by a two-sided Brownian motion $\{W(x)\}_{-\infty<x<\infty}$. $B$ and $W$ are independent. The Hamiltonian we have is

\begin{equation}
H_t :=   \int 1_{x\in \mathcal{R}_t} W(dx)  =\int^{M_t}_{m_t} W(dx) =W_{M_t} -W_{m_t},
\end{equation}
and
\begin{equation}
Z_t := E(\exp ( \beta H_t)) 
\end{equation}
where $M_t;=\max_{0\leq s \leq t} B_s$ and $m_t:= \min_{0\leq s \leq t} B_s$.
The annealed patition function is easily computed.

\begin{equation}
\mathbb{E}Z_t := E(\exp ( \frac{1}{2}\beta^2 R_t)) ,
\end{equation}
where $R_t=M_t-m_t$ is the range of the Brownian motion $B$ up to time $t$. By rescaling $W$, 

$$Z_t =^d E\exp ( \beta t^{1/3} (W_{M_{t}/t^{2/3}} -W_{m_{t}/t^{2/3}})  ).$$
Denote $T=t^{1/3}$,
$$Z_t=^d E\exp ( \beta T (W_{M_{T^3}/T^{2}} -W_{m_{T^3}/T^{2}})  )$$
$$=E\exp ( \beta T (W_{M_{T}/T} -W_{m_{T}/T})  ). $$
The last equality is obtained by rescaling $B$. \cite{Fe51} calculated the explicit joint density of $(m_T,M_T)$. If the brownian motion $B$ is ballistic, namely, of order $T$, the price to pay is $\exp(-cT)$, and the energy term gives $\exp(\tilde{c}T)$ as well. So
$$\frac{1}{T}\log E\exp ( \beta T (W_{M_{T}/T} -W_{m_{T}/T})  ) \approx O(1).$$
Then we have
\begin{equation}\label{free}
t^{-1/3} \log Z_t \approx O(1)
\end{equation}
from above discussions. From the scaling relation $\frac{1}{3}=2\chi -1$,
\eqref{free} suggests that the scale $\chi$ of the end-point of the polymer in the 1D case is $t^{2/3}$, which is the same as $(1+1)$-directed polymer.

\section*{Acknowledgement}

The research of Chien-Hao Huang was partially supported by Ministry of Science and Technology grants MOST 106-2115-M-002-010.


\begin{thebibliography}{}
	
	\bibitem{AKQ14}
    Alberts, T., Khanin, K. and Quastel, J. (2014).
    \emph{The intermediate disorder regime for directed polymers in dimension 1 + 1.}
    Ann. Probab.
    \textbf{42 } 1212-1256.

    \bibitem{Fe51}
    { Feller, W.} (1951).
    \newblock {\em The asymptotic distribution of the range of sums of independent
    random variables.\/}
    \newblock  Ann. Math. Statistics {\bf 22} , 427--432.

	\bibitem{vdBT91}
    van der Berg, M. and T\'{o}th, B. (1991).
    \emph{Exponential estimates for the Wiener sausage.}
    Probab. Theory Relat. Fields
    \textbf{88} 249-259.

	
	\bibitem{CSZ17}
	Caravenna, D., Sun, R. and Zygouras, N. (2017).
	\emph{Polynomial chaos and scaling limits of disordered systems.}
	J. Eur. Math. Soc.
	\textbf{19} 1-65.
	
	\bibitem{CSZ17b}
    Caravenna, D., Sun, R. and Zygouras, N. (2017).
    \emph{Universality in marginally relevant disordered systems.}
    Ann. Appl. Probab.
    \textbf{27} 3050-3112.	
	
	\bibitem{Chen09}
	Chen, X. Random Walk Intersections: Large Deviations and Related Topics. Math. Surv. Mono. 157. Amer. Math. Soc., Providence, RI, 2009
	
	\bibitem{Com17}
	Comets, F. (2017).
	\emph{Directed Polymers in random environments}.
	Lecture Notes in Mathematics, vol. 2175
	Springer, Berlin.
	
	\bibitem{DV79}
	Donsker, M. D. and Varadhan, S. R. S. (1979).
	\emph{On the Number of Distinct Sites Visited by a Random Walk.}
	Comm. Pure Appl. Math.
	\textbf{32} 721-747.
	
	\bibitem{Spi76}
	{Spitzer, F.} (1976).
	\newblock {\em Principles of random walk\/}, Second edition.
	Graduate Texts in Mathematics, Vol.~{\bf 34}.
	\newblock Springer-Verlag, New York-Heidelberg.
\end{thebibliography}
\end{document}